\documentclass[a4paper,11pt, twoside]{article}
\usepackage{amsmath,amsthm}
\usepackage{amssymb,latexsym}
\usepackage{mathrsfs}
\usepackage{enumerate}
\usepackage[colorlinks,citecolor=blue]{hyperref}
\usepackage[numbers,sort&compress]{usbib}
\usepackage{tcolorbox}

\newtheorem{theorem}{Theorem}[section]

\newtheorem{lemma}{Lemma}[section]

\newtheorem{definition}{Definition}[section]
\newtheorem{remark}{Remark}[section]

\newtheorem{example}{Example}[section]
\theoremstyle{definition} \theoremstyle{remark}
\numberwithin{equation}{section}
\allowdisplaybreaks
\numberwithin{equation}{section}

\newcommand{\R}{\mathbb{R}}

\DeclareMathAlphabet{\mathpzc}{OT1}{pzc}{m}{it}

\date{}

\begin{document}
	
	
	\date{}
	\title{{\bf Admissibility approach to nonuniform exponential dichotomies roughness with nonlocal perturbations
	}}
	
	\author{Jiawei He$^{1,2}$, Jianhua Huang $^{1,*}$ \\[1.8mm]
		\footnotesize  {$^1$  College of Science, National University of Defense Technology, Changsha 411100, China}\\
		\footnotesize  {$^2$ School of Mathematics, Guangxi University, Nanning 530004, China}
	}

	\maketitle
	
	\begin{abstract}
Nonuniform exponential dichotomy serves as an important characteristic of nonuniform hyperbolicity, while admissibility of function classes is often used to characterize nonuniform exponential dichotomy. In this paper, we investigate the preservation of nonuniform exponential dichotomy under certain nonlocal perturbations. By utilizing the concept of admissibility of a pair of function classes, we establish sufficient conditions to ensure that the dichotomy results are consistent with those in the homogeneous situation. These results need to satisfy a smallness integrability condition.
\\[2mm]
		{\bf Keywords:} Nonuniform exponential dichotomy, admissibility, roughness \\[2mm]
		{\bf 2020 MSC:} 34D09, 37D25
	\end{abstract}

\section{Introduction}
As is well known, the theory of exponential dichotomy for linear differential equations of the form $x'=A(t)x$ can be found in \cite{Massera1966,Coppel1978},  starting in the work of Perron \cite{Perron1930}. It is has been extensively developed in works such as differential equations \cite{Henry,Shen2021}, nonautonomous dynamical systems\cite{AAE2025,Sell2002,Popescu2006}, and invariant manifolds \cite{Nguyen2009,Li-Zeng-Huang2022} and many other domains \cite{Carvalho2025,Longo2019,Vu2024,Zhou21} etc..
In previous works, Pesin \cite{Pesin1976} generalized
the classical notion of uniform hyperbolicity through the nonuniform one in finite-dimensional systems, where a stable manifold theorem is derived for trajectories exhibiting nonuniform hyperbolicity.  Preda and Megan \cite{Preda1983} proposed the concept of nonuniform exponential dichotomy, avoiding any conditions on the norms associated with the dichotomy projections. The nonuniform exponential dichotomy formulates the nonuniform hyperbolicity for dynamical systems, motivated by ergodic theory and nonuniform hyperbolic theory, Barreira and Valls \cite{Barreira2008-1,Barreira2009-1,Barreira2011-1,Barreira2014-1} proved a series of improvements and contributions for nonuniform exponential dichotomy.
 For more details we refer the reader to
 the books \cite{Coppel1978,Chicone1999} and the papers \cite{Zhou16JFA,Preda2018,Wu2023} and references therein.

An effective approach to study the hyperbolicity for a system is the admissibility of a pair of function classes, which can  characterize the (nonuniform) exponential dichotomy. For the linear equation $x'=A(t)x$, $t\in J=\R,\R_+$ or $\R_-$, the pioneer work as Schaffer \cite{Schaffer1960} investigated the admissibility and exponential dichotomy,
Nguyen \cite{Nguyen2006} studied the admissibility to an admissible Banach space, where the existence and robustness of  exponential dichotomy on $\R_+$ were established. Barreira
and Valls \cite{Barreira2011-1} described nonuniform exponential dichotomy in terms of admissibility of function classes on $\R_+$ in
Banach spaces.
Sasu et al. \cite{Sasu2013admissibility} showed the  exponential dichotomy under the admissibility pair between $C_b$ and $L^p$ on $\R_+$.  Barreira et al. \cite{Barreira2017-1} characterized completely the notion of an  exponential dichotomy with respect to a family of norms in terms of the admissibility of bounded solutions.
Zhou et al. \cite{Zhou17} proved the relationship between the existence of nonuniform exponential dichotomy and the admissibility of a pair of Lynapunov bounded function classes, while
Dragivcevic et al. \cite{Dragivcevic2022} established the relationship between the existence of nonuniform exponential dichotomy and the admissibility of a pair of function classes
without a Lyapunov norm recently.
Note that without the notation of admissibility, Barreira and Valls \cite{Barreira2008-1}
 established the  roughness if $\|B(t)\|\leq \delta e^{-2\epsilon|t|}$ for $J=\R_-,\R_+$ or $\R$ with exponent index $\alpha>0$ and $\delta>0$ sufficiently small.
Recently, Pinto, Poblete and Xia \cite{Pinto2024} provided a generalization linear perturbations that satisfy an integral condition such that the roughness remains,  and the conclusions represent important improvements in weakening the aspects related to the robustness of nonuniform exponential dichotomy presented in \cite{Barreira2008-1}, which were discussed without considering admissibility.

An interesting issue is whether there exists the  exponential dichotomy in differential equations driven by nonlocal external force perturbations. Recently, Straatman and Hupkes \cite{Straatman2021} proved that a functional differential equations of mixed type with infinite range discrete and/or
continuous interactions admit exponential dichotomies that imposed the result of \cite{Paret1999}. In this nonlocal perturbation, we consider an example in $X=C_b(\R)$ with norm $\|x\|=\sup_{t\in \R}|x(t)|$ for the non-autonomous, integro-differential equation is given by
\begin{equation}\label{IDE}
x'(t)=a(t)x(t)+\int_{\R} g(t,\xi)x(t+\xi)d\xi,
\end{equation}
where  the functions $t\mapsto a(t)$ belongs to $X,$ $t\mapsto g(t,\cdot)$ belongs to $C_b(\R;L^1(\R))$. It is clear that $U(t,s)=\exp(\int_s^t a(\tau)d\tau)$ is the evolution family of the linear equation of (\ref{IDE}), and its admits an exponent dichotomy on $\R$ for projection $P^s(t)=1$ or $P^s(t)=0$. Let $A(t)=a(t):\R\to L(X)$, and $B(t):\R\to L(X)$ as
$$
B(t)x(t):=\int_{\R} g(t,\xi)x(t+\xi)d\xi,
$$
Hence, the  equation (\ref{IDE}) rewrites to
\begin{equation}\label{eq:2.2}
	x'(t)=A(t)x+B(t)x.
\end{equation}
By a simple calculation, taking $g(t,\xi)=w(t)J(\xi)$ as an oscillating slowly increasing kernel $J(\xi)=\sin(\xi^2)(1+\xi^2)^{-\beta}$, $\beta\in(1/2,1)$, but the requirement of $\|B(t)\|\leq \delta e^{-2\epsilon|t|} $ in \cite{Barreira2008-1} is not available if $w(t)\sim |t|^{-2\beta}e^{-2\epsilon|t|}$ for sufficiently small $\delta>0$.
By applying the  nonuniform exponential dichotomy results for a linear problem presented in \cite{Dragivcevic2022} with admissibility of a pair of function classes, and integrating them with more generalized integral condition, we establish the  roughness of equation (\ref{eq:2.2}).

In this paper,  the main objective is to
study the nonuniform exponential dichotomy in order to expand the set of admissible small perturbations, with applications to nonlocal integro-differential equations.
In order to achieve this goal, in Section 2 we introduce some notations and the main definitions. In Section 3, we prove the existence of nonuniform exponential dichotomy, under a perturbation with integral condition. We also propose example to illustrate the result.

\section{Preliminaries}

Let $X$ be a real Banach space, let $L(X)$ be the Banach algebra of all bounded linear operators from $X$ into itself. Denotes $J$ by a real interval of $\R_+$, $\R_-$ or $\R$. The norms on $X$ and $L(X)$ are denoted by both $\|\cdot\|$ for a convenient.  Let $\Delta_J=\{(t,s),t,s,\in J,t\geq s\}$.

We denotes a evolution family $\{U(t,s)\}_{t\geq s}$ by $U(t,s):\Delta_J\to L(X)$ generated with operator $A(t)$, i.e., the operator $U(t,t)=Id$, $I$ is the identity operator; $U(t,s)=U(t,r)U(r,s)$, for all $t\geq r\geq s,\in J$; $(t,s)\mapsto U(t,s)x$ is continuous for every $x\in X$. Let a strongly continuous function $P : J\to L(X)$ be a projection valued function if
$P^2(t)=P(t)$, for every $t\in J$. Denote by $Q(t)=Id-P(t)$ the complementary projection of $P(t)$, for every $t\in J$, and $N(P(s))$ is the null space of $P(s)$.

For the evolution family generated by linear operator $A(t)$, it is convenient to consider the Cauchy problem
\begin{equation}\label{mild}
\left\{	\begin{aligned}
& x'(t)=A(t)x(t)+y(t),\quad t\geq s,\in J,\\
&x(s)=x_s\in X,
	\end{aligned}\right.
\end{equation}
has a solution in the mild form
$$x(t)=U(t,s)x_s+\int_0^tU(t,\tau)y(\tau)d\tau,$$
for some $y(\cdot)\in X$,  see e.g. \cite{Pazy1983,Enagel1999}. Additionally, one is interested in
exponential dichotomy on invariant subspaces, see e.g. \cite{Henry}.
\begin{definition}
Let $U(t,s)$ be the evolution operator having a nonuniform exponential dichotomy, that is, there exists a projection $P(\cdot):J\to L(X)$ such that
\begin{enumerate}
	\item [{\rm (i)}] $U(t,s)P (s) =P (t)U (t,s)$ for $t\geq s$,
	\item [{\rm (ii)}] $U(t,s)|_{N(P(s))}:N(P(s))\to N(P(t))$ is invertible for $t\geq s$,
	\item [{\rm (iii)}] there exist constants $\alpha,K > 0$ and $\varepsilon\geq0$ such that
	$$
	\|U(t,s)P(s)\|\leq K e^{-\alpha(t-s)+\varepsilon|s|},~~t\geq s,
	$$
	$$
	\|U(t,s)Q(s)\|\leq Ke^{-\alpha(s-t)+\varepsilon|s|},~~s\geq t,
	$$
	where $U(t,s):(U(t,s)|_{N(P(s))})^{-1}$ for $s\geq t$.
\end{enumerate}
\end{definition}
Clearly, for $\varepsilon=0$, we say $U(t,s)$ have a uniform exponential dichotomy. In addition, the $\alpha$ and $Ke^{\epsilon|s|}$ are called the exponent and the bound of a nonuniform exponential dichotomy, respectively.

We next introduce the function classes for admissibility  introduced by \cite{Dragivcevic2022}. For each $\varepsilon\geq0$, $\beta>0$, let $X_{\varepsilon,\beta}(J)$ consist of all locally integrable $x:J\to X$ such that
\begin{align*}
	\|x\|_{\varepsilon,\beta}=&\sup_{t\in J}\left( e^{-\beta t}\int_{t}^{t+1}e^{\varepsilon|\tau|}\|x(\tau)\|d\tau\right),~~J=\R,~or~J=\R_+,~or\\	\|x\|_{\varepsilon,\beta}=&\sup_{t\in J}\left( e^{-\beta t}\int_{t-1}^{t}e^{\varepsilon|\tau|}\|x(\tau)\|d\tau\right),~~J=\R_-.
\end{align*}

Consider $X_{\varepsilon,\beta}(\R)$ consist of all locally integrable $x:\R\to X$ such that
\begin{align*}
	\|x\|_{\varepsilon,\beta|\cdot|}=&\sup_{t\in\R}\left( e^{-\beta |t|}\int_{t}^{t+1}e^{\varepsilon|\tau|}\|x(\tau)\|d\tau\right).
\end{align*}
Let $\Gamma_\beta$ be defined by
$$
\Gamma_\beta(J)=\{x\in X:~~\|x(\cdot)\|~\text{is continuous on}~J,~~\|x\|_\beta:=\sup _{t\in J}e^{-\beta t}\|x(t)\|<+\infty\},
$$
and $\Gamma_{\beta|\cdot|}$ be defined by
$$
\Gamma_{\beta|\cdot|}(\R)=\{x\in X:~~\|x(\cdot)\|~\text{is continuous on}~\R,~~\|x\|_{\beta|\cdot|}:=\sup _{t\in \R}e^{-\beta |t|}\|x(t)\|<+\infty\},
$$

\begin{definition}
	The pair $(Y,Z)$ is said to be (properly admissible) of function classes with respect to an evolution family $U (t,s)$ if for each $y\in Y$, there exists a (unique) $x\in Z$ such that
	$$
	x(t)=U (t,s)x(s)+ \int_s^t U (t,\tau)y(\tau)d\tau,~~t\geq s,\in J,
	$$
	
	In this case, the pair $(y,x)$ is (properly) admissible with respect to $U(t,s)$.
\end{definition}

\section{The existence of nonuniform exponential dichotomies}

In this section, we show that the problem (\ref{eq:2.2}) admits a nonuniform exponential dichotomies when $\{U(t,s)\}_{t\geq s,\in J}$ admits a nonuniform exponential dichotomy associated with $A(\cdot)\in L(X)$.
We first introduce the Green function $G$ by means of the nonuniform exponential dichotomy of $U(t,s)$, which is defined as
$$
G(t,s)=\left\{\begin{aligned}
	&U(t,s)P(s),&& t\geq s,\\
	&-U(t,s)Q(s), && t<s.
\end{aligned} \right.
$$

\begin{lemma} \label{Le3.2}
	Let
	$$
	x(t)=\int_J G(t,\tau)f(\tau)d\tau,~~J=\R_\pm,~or~J=\R,
	$$
	for some appropriate $f\in X$,
	then $x$ is a solution of
	\begin{align*}
		x(t)=&U(t,s)x(s) +\int_s^tU(t,\tau)f(\tau) d\tau,~~t\geq s,\in J .
	\end{align*}
\end{lemma}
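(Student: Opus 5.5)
The plan is a direct computation with the Green function. We split $x(t)=\int_J G(t,\tau)f(\tau)\,d\tau$ according to the sign of $t-\tau$:
\[
x(t)=\int_{J\cap(-\infty,t]}U(t,\tau)P(\tau)f(\tau)\,d\tau-\int_{J\cap(t,\infty)}U(t,\tau)Q(\tau)f(\tau)\,d\tau .
\]
Here "appropriate $f$" is taken to mean that both integrals converge absolutely in $X$. By the dichotomy bounds (iii) this holds, for instance, whenever $\int_J e^{-\alpha|t-\tau|+\varepsilon|\tau|}\|f(\tau)\|\,d\tau<\infty$ for every $t$; this is the case for $f$ in the classes $X_{\varepsilon,\beta}(J)$ with $\beta<\alpha$. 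Convergence lets us apply bounded operators under the integral sign.

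Fix $t\geq s$ in $J$. The key step is to apply $U(t,s)$ to $x(s)$ and use the cocycle property together with the invariance (i) on each piece.

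For the stable part, when $\tau\le s$ we have $U(t,s)U(s,\tau)P(\tau)=U(t,\tau)P(\tau)$.

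For the unstable part, take $\tau>s$. Recall that $U(s,\tau)Q(\tau)$ denotes the inverse of $U(\tau,s)|_{N(P(s))}$ applied to $Q(\tau)$; it maps into $N(P(s))$. Using (i) and (ii), we check that $U(t,s)U(s,\tau)Q(\tau)=U(t,\tau)Q(\tau)$, where the right-hand side means the forward operator if $\tau\le t$ and the inverse if $\tau>t$. To verify this, apply the injective map $U(\max(t,\tau),t)|_{N(P(t))}$ to both sides and reduce to $U(\tau,s)U(s,\tau)Q(\tau)=Q(\tau)$. This gives
\[
U(t,s)x(s)=\int_{J\cap(-\infty,s]}U(t,\tau)P(\tau)f\,d\tau-\int_{J\cap(s,\infty)}U(t,\tau)Q(\tau)f\,d\tau .
\]

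Subtracting this from $x(t)$, the tails over $(-\infty,s]$ and $(t,\infty)$ cancel. What remains is
\[
x(t)-U(t,s)x(s)=\int_s^t U(t,\tau)P(\tau)f(\tau)\,d\tau+\int_s^t U(t,\tau)Q(\tau)f(\tau)\,d\tau=\int_s^tU(t,\tau)f(\tau)\,d\tau,
\]
since $P(\tau)+Q(\tau)=Id$. This is exactly the required identity.

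We expect the main obstacle to be the unstable piece. One has to justify carefully the composition rule for the backward operators $U(s,\tau)Q(\tau)$, which are defined only through the inverse on $N(P(\cdot))$, and to ensure that the integrals converge so that $U(t,s)$ can be moved inside them. The rest is bookkeeping of integration domains. When $J=\R_-$ or $\R_+$, the same argument applies with the integrals restricted to $J$.
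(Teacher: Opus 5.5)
Your proposal is correct and follows essentially the same route as the paper. Both arguments split $x(t)$ via the Green function, move $U(t,s)$ inside the integrals using the cocycle property on the stable and unstable pieces, and let the tails over $(-\infty,s]$ and $(t,\infty)$ cancel, leaving $\int_s^t U(t,\tau)(P(\tau)+Q(\tau))f(\tau)\,d\tau$. The difference is that you explicitly justify the absolute convergence and the identity $U(t,s)U(s,\tau)Q(\tau)=U(t,\tau)Q(\tau)$ for $\tau>s$, including $\tau>t$, both of which the paper uses without comment.
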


\begin{proof}
	The cases of $\R_-$ and $\R$ are similar, we just show the proof of $\R_+$. In fact, for $t\geq s$,
	\begin{align*}
		x(t)-U(t,s)x(s)=&\int_0^{+\infty}G(t,\tau)f(\tau)d\tau-U(t,s)\int_0^{+\infty}G(s,\tau)f(\tau)d\tau\\
		=&\int_0^t U(t,\tau)P(\tau)f(\tau)d\tau-\int_t^{+\infty}U(t,\tau)Q(\tau)f(\tau)d\tau\\
		&-U(t,s)\int_0^s U(s,\tau)P(\tau)f(\tau)d\tau+U(t,s)\int_s^{+\infty}U(s,\tau)Q(\tau)f(\tau)d\tau\\
		=&\int_s^t U(t,\tau)P(\tau)f(\tau)d\tau -\int_t^{+\infty}U(t,\tau)Q(\tau)f(\tau)d\tau\\
		& + \int_s^{+\infty}U(t,\tau)Q(\tau)f(\tau)d\tau\\
		=&\int_s^t U(t,\tau)P(\tau)f(\tau)d\tau+\int_s^t U(t,\tau)Q(\tau)f(\tau)d\tau
		=\int_s^t U(t,\tau) f(\tau)d\tau.
	\end{align*}
	Thus, the conclusion holds.
\end{proof}

\begin{lemma}\label{Le3.1}
 	Let $U(t,s)$ admit a nonuniform exponential dichotomy with projection $P(s)$ and exponent $\alpha$, bound $Ke^{\epsilon|s|}$ such that $0\leq \epsilon<\alpha-\beta$, and let $b(t)=\|B(t)\|e^{\varepsilon |t|}$ satisfying
 	$$
 \theta:=\sup_{t\in\R} \int_{-\infty}^{+\infty} e^{-(\alpha-\beta)|t-\tau|}b(\tau)d\tau<+\infty.
 	$$
 Then for $y\in Y_{\epsilon,\beta}(J)$, the equation
\begin{equation}\label{Eq}
	x'(t)=(A(t)+B(t))x(t)+y(t),
\end{equation}
	has a solution in $\Gamma_\beta(J)$ if and only if there exists a function $x\in\Gamma_\beta(J)$ such that
	\begin{equation}\label{eqe}
	\begin{aligned}
		x(t)=&U(t,s)P(s)x_0+\int_s^tU(t,\tau)P(\tau)(B(\tau)x(\tau)+y(\tau))d\tau\\
		&-\int_t^\infty U(t,\tau)Q(\tau)(B(\tau)x(\tau)+y(\tau))d\tau,~~~t\geq s,\in J= \R_+,
	\end{aligned}
\end{equation}
or
	\begin{equation}\label{eqe2}
	\begin{aligned}
		x(t)=&U(t,s)Q(s)x_0-\int_{t}^s U(t,\tau)Q(\tau)(B(\tau)x(\tau)+y(\tau))d\tau\\
		&+\int_ {-\infty}^t U(t,\tau)P(\tau)(B(\tau)x(\tau)+y(\tau))d\tau,~~~t\in J= \R_-,
	\end{aligned}
\end{equation}
or a function $x\in \Gamma_{\beta }(J)$ for $y\in Y_{\epsilon,\beta }(J)$~or $x\in \Gamma_{\beta|\cdot|}(J)$  for $y\in Y_{\epsilon,\beta|\cdot|}(J)$ as
	\begin{equation}\label{eqe3}
	\begin{aligned}
		x(t)=& \int_{-\infty}^\infty G(t,\tau) (B(\tau)x(\tau)+y(\tau))d\tau ,~~~t\in J= \R ,
	\end{aligned}
\end{equation}
for some $x_0\in X$.
\end{lemma}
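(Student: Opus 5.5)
The plan is to read ``solution of (\ref{Eq})'' in the mild sense, treating $B(\tau)x(\tau)+y(\tau)$ as the inhomogeneity for $U(t,s)$:
\[
x(t)=U(t,s)x(s)+\int_s^t U(t,\tau)\bigl(B(\tau)x(\tau)+y(\tau)\bigr)d\tau,\quad t\ge s .
\]
Then we show this is equivalent to (\ref{eqe}), (\ref{eqe2}) or (\ref{eqe3}), according to $J$. The first step is to check that the improper integrals make sense for $x\in\Gamma_\beta(J)$ and $y\in Y_{\varepsilon,\beta}(J)$. Writing $f=Bx+y$ and using the dichotomy bounds, we get
\[
\|U(t,\tau)Q(\tau)B(\tau)x(\tau)\|\le K e^{-\alpha(\tau-t)}b(\tau)\|x\|_\beta e^{\beta\tau}
= K\|x\|_\beta e^{\beta t}\, e^{-(\alpha-\beta)(\tau-t)}b(\tau).
\]
The hypothesis $\theta<\infty$ therefore gives $\int_t^\infty\|U(t,\tau)Q(\tau)B(\tau)x(\tau)\|d\tau\le K\theta\|x\|_\beta e^{\beta t}$. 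The $y$-part is handled by splitting $[t,\infty)$ into unit intervals $[t+k,t+k+1]$. On each piece the contribution is bounded by $Ke^{\alpha}e^{-\alpha k}e^{\beta(t+k)}\|y\|_{\varepsilon,\beta}$, and this series converges because $\alpha>\beta$. The same estimates handle the $P$-integrals over $(-\infty,t]$ when $J=\R_-$ or $\R$; in the $\beta|\cdot|$ setting, $e^{\beta|\tau|}\le e^{\beta|t|}e^{\beta|t-\tau|}$ and $\alpha-\beta>\varepsilon\ge 0$ play the same role. This establishes that every integral is absolutely convergent and that the right-hand sides lie in the relevant $\Gamma$ space.

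For sufficiency, suppose $x\in\Gamma_\beta$ satisfies (\ref{eqe}), (\ref{eqe2}) or (\ref{eqe3}). We compute $x(t)-U(t,r)x(r)$ for $t\ge r$ exactly as in the proof of Lemma \ref{Le3.2}, using the invariance $U(t,r)P(r)=P(t)U(t,r)$ and the cocycle property. The free term $U(t,s)P(s)x_0$ (or $U(t,s)Q(s)x_0$) cancels, because $U(t,r)U(r,s)=U(t,s)$. The $Q$-tails over $[t,\infty)$ and $[r,\infty)$ combine, and what is left is $\int_r^tU(t,\tau)(P+Q)f\,d\tau$, so $x$ is a mild solution of (\ref{Eq}). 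On $\R$ this is just Lemma \ref{Le3.2} with $f=Bx+y$.

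For necessity, take a mild solution $x\in\Gamma_\beta(J)$ and treat $J=\R_+$. Apply $Q(t)$ to the variation of constants formula between $t$ and a large $T$, then invert $U$ on $N(P)$:
\[
Q(t)x(t)=U(t,T)Q(T)x(T)-\int_t^T U(t,\tau)Q(\tau)f(\tau)d\tau .
\]
Now $\|U(t,T)Q(T)x(T)\|\le Ke^{-\alpha(T-t)+\varepsilon T}e^{\beta T}\|x\|_\beta\to0$ as $T\to\infty$, since $\alpha>\beta+\varepsilon$; this is exactly where the condition $\epsilon<\alpha-\beta$ is used. Letting $T\to\infty$ yields the $Q$-part of (\ref{eqe}). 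The $P$-part comes directly from applying $P(t)$ to the formula on $[s,t]$, with $x_0=x(s)$. Adding the two parts gives (\ref{eqe}). For $\R_-$ the roles are swapped: we send $r\to-\infty$ in the $P$-component. There the decay $e^{-\alpha(t-r)+\varepsilon|r|}e^{\beta r}$ must vanish; this is immediate for $\beta\ge 0$, while in the $|\cdot|$ norm it again follows from $\alpha>\beta+\varepsilon$. For $\R$ both limits are taken, so no free term remains and we obtain (\ref{eqe3}).

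I expect the main obstacle to be the limiting step that kills the boundary terms $U(t,T)Q(T)x(T)$ and $U(t,r)P(r)x(r)$, together with the uniform integrability of the tails. These require careful bookkeeping of the sign of $\tau$ in $e^{\varepsilon|\tau|}$ against the growth weight $e^{\beta t}$ versus $e^{\beta|t|}$. The rest is algebraic manipulation identical to Lemma \ref{Le3.2}.
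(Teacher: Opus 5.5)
Your proposal is correct and follows the paper's proof in its essential step: for necessity you project the variation-of-constants formula onto $N(P)$, invert $U$ there, and kill the boundary term $U(t,T)Q(T)x(T)$ using $\varepsilon<\alpha-\beta$, exactly as the paper does, with the symmetric argument for the $P$-component on $\R_-$. The deviations are minor. You verify sufficiency through the mild-solution algebra of Lemma~\ref{Le3.2} rather than by differentiating, which avoids the paper's assumption that $\partial_tU(t,s)=A(t)U(t,s)$ holds in $L(X)$. On $\R$ you pass to both limits directly instead of gluing the $\R_+$ and $\R_-$ representations at $t=0$.
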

\begin{proof}
For $J=\R_+$, one see  that if $x\in\Gamma_\beta(J)$ satisfies (\ref{eqe}), and then $x$ is a solution of (\ref{Eq}) in $\Gamma_\beta(J)$ clearly. In fact, since $t\mapsto U(t,s)$ is differentiable as $\partial_t U(t,s)=A(t)U(t,s)$ on $L(X)$,
this yields
\begin{equation}\label{equa}
\begin{aligned}
x'(t)=&A(t)U(t,s)P(s)x_0+\int_s^tA(t)U(t,\tau)P(\tau)(B(\tau)x(\tau)+y(\tau))d\tau\\
&-\int_t^\infty A(t) U(t,\tau)Q(\tau)(B(\tau)x(\tau)+y(\tau))d\tau+B(t)x(t)+y(t)\\
=&A(t)x(t)+B(t)x(t)+y(t).
\end{aligned}
\end{equation}

On the contrary, let $y\in Y_{\epsilon,\beta}(J)$ such that equation (\ref{Eq}) has a solution $x\in\Gamma_\beta(J)$, by the variation of constants formula, the solution of linearly perturbation equation can be represented as
	\begin{align*}
		x(t)= U(t,s)x(s) +\int_s^tU(t,\tau) (B(\tau)x(\tau)+y(\tau))d\tau ,
	\end{align*}
	it follows that
	\begin{align*}
		P(t)x(t)= P(t)U(t,s)x(s) +\int_s^tP(t)U(t,\tau) (B(\tau)x(\tau)+y(\tau))d\tau ,
	\end{align*}
	and
	\begin{align*}
		Q(t)x(t)= U(t,s)Q(s)x(s) +\int_s^tU(t,\tau)Q(\tau) (B(\tau)x(\tau)+y(\tau))d\tau ,
	\end{align*}
	this implies that
	\begin{align*}
		Q(s)x(s)= U(s,t)Q(t)x(t)-\int_s^tU(s,\tau)Q(\tau) (B(\tau)x(\tau)+y(\tau))d\tau ,
	\end{align*}
	by the dichotomy estimate for $0\leq\epsilon<\alpha-\beta$
	\begin{align*}
		\|U(s,t)Q(t)x(t)\|\leq&  Ke^{-\alpha(t-s)+\epsilon|t|}\|x(t)\|
		\leq Ke^{-\alpha(t-s)+\epsilon|t|+\beta t}\|x \|_\beta\\
		\to& 0,\quad {\rm as}\ t\to+\infty.
	\end{align*}
	On the other hand, for all $t\geq s$, we have
	\begin{align*}
		&\left\|\int_s^tU(s,\tau)Q(\tau) (B(\tau)x(\tau)+y(\tau))d\tau\right\|\\
		\leq& K\int_s^t e^{-\alpha(\tau-s)+\epsilon|\tau|} ( b(\tau)e^{-\epsilon|\tau|}\|x(\tau)\|+\|y(\tau)\|)d\tau\\
		\leq&K \int_s^t e^{-\alpha(\tau-s) } b(\tau) e^{\beta\tau} \|x \|_\beta d\tau+
		K\int_s^t e^{-\alpha(\tau-s)+\epsilon|\tau|} \|y(\tau)\| d\tau.
	\end{align*}
	Since
	\begin{align*}
	\int_s^t e^{-\alpha(\tau-s) } b(\tau)e^{\beta\tau} d\tau
	= e^{\beta s}\int_s^t e^{-(\alpha-\beta)(\tau-s) } b(\tau)d\tau,
\end{align*}
	and
	\begin{align*}
		\int_s^t e^{-\alpha(\tau-s)+\epsilon|\tau|} \|y(\tau)\| d\tau\leq &\sum_{k=0}^{[t-s]}\int_{s+k}^{s+k+1}e^{-\alpha(\tau-s)+\epsilon|\tau|} \|y(\tau)\| d\tau\\
		\leq& e^{\alpha s}\sum_{k=0}^{[t-s]}e^{-(\alpha-\beta)(s+k)}\|y\|_{\epsilon,\beta}
		= e^{\beta s}\sum_{k=0}^{[t-s]}e^{-(\alpha-\beta) k}\|y\|_{\epsilon,\beta}.
	\end{align*}
This implies that
	\begin{align*}
	 \left\|\int_s^tU(s,\tau)Q(\tau) (B(\tau)x(\tau)+y(\tau))d\tau\right\|
		\leq& K\theta e^{\beta s} \|x \|_\beta+
		Ke^{\beta s}\|y\|_{\epsilon,\beta}\frac{1-e^{-(\alpha-\beta)([t-s]+1)}}{1-e^{-(\alpha-\beta)}} ,
	\end{align*}
	where we use the identity
	$$
	\sum_{k=0}^{[t-s]}e^{-(\alpha-\beta) k}
	=\frac{1-e^{-(\alpha-\beta)([t-s]+1)}}{1-e^{-(\alpha-\beta)}}.
	$$
	Thus, let $t\to+\infty$, we have
	\begin{align*}
		Q(s)x(s)= -\int_s^{+\infty} U(s,\tau)Q(\tau) (B(\tau)x(\tau)+y(\tau))d\tau ,
	\end{align*}
	this shows
	\begin{align*}
		Q(t)x(t)= &-U(t,s)\int_s^{+\infty}U(s,\tau)Q(\tau) (B(\tau)x(\tau)+y(\tau))d\tau \\ &+\int_s^tU(t,\tau)Q(\tau) (B(\tau)x(\tau)+y(\tau))d\tau\\
		=&-\int_t^{+\infty}U(t,\tau)Q(\tau) (B(\tau)x(\tau)+y(\tau))d\tau.
	\end{align*}
	Hence, it follows that
	\begin{align*}
		x(t)=&	P(t)x(t)+Q(t)x(t)\\
		=& P(t)U(t,s)x(s) +\int_s^tP(t)U(t,\tau) (B(\tau)x(\tau)+y(\tau))d\tau \\
		&-\int_t^{+\infty}U(t,\tau)Q(\tau) (B(\tau)x(\tau)+y(\tau))d\tau.
	\end{align*}
	Clearly, the solution $x$ of linearly perturbation equation satisfies above equation for each $x_0\in X$ such that $P(s)x_0=P(s)x(s)$. The case of $J=\R_-$ is similar.
	
It is from the case of $\R_+$ and $\R_-$, similarly to (\ref{equa}), if $\Gamma_{\beta|\cdot|}(\R)$ satisfies (\ref{eqe3}), and then $x$ is a solution of (\ref{Eq}) in $\Gamma_{\beta|\cdot|}(\R)$. On the contrary, let $x$ be a solution of (\ref{Eq}) in $\Gamma_{\beta|\cdot|}(\R)$ for $y\in Y_{\epsilon,\beta|\cdot|}(\R)$, in particular, when $y\in Y_{\epsilon,\beta }(\R_+)$, it follows $x$ in  $\Gamma_{\beta }(\R_+)$ and another case is the same for $\R_-$. From the aforementioned proved results for $\R_+$ and $\R_-$, there are $x_+$ and $x_-$ such that
	\begin{equation}\label{x+}
	\begin{aligned}
		x(t)=&U(t,0)P(0)x_++\int_0^tU(t,\tau)P(\tau)(B(\tau)x(\tau)+y(\tau))d\tau\\
		&-\int_t^\infty U(t,\tau)Q(\tau)(B(\tau)x(\tau)+y(\tau))d\tau,
	\end{aligned}
\end{equation}
and
\begin{equation*}
	\begin{aligned}
		x(t)=&U(t,0)Q(0)x_--\int_{t}^0 U(t,\tau)Q(\tau)(B(\tau)x(\tau)+y(\tau))d\tau\\
		&+\int_ {-\infty}^t U(t,\tau)P(\tau)(B(\tau)x(\tau)+y(\tau))d\tau.
	\end{aligned}
\end{equation*}
Let $t=0$ in above equations, it follows
	\begin{equation*}
	\begin{aligned}
  P(0)x_+&- \int_0^\infty U(0,\tau)Q(\tau)(B(\tau)x(\tau)+y(\tau))d\tau
 =  Q(0)x_- \\& +\int_ {-\infty}^0 U(0,\tau)P(\tau)(B(\tau)x(\tau)+y(\tau))d\tau.
	\end{aligned}
\end{equation*}
Since $P(0)Q(0)=0$, $P(0)^2=P(0)$ and $P(0)U(0,\tau)=U(0,\tau)P(\tau)$, $U(0,\tau)Q(\tau)=Q(0)U(0,\tau)$, we obtain
	\begin{equation*}
		P(0)x_+= \int_ {-\infty}^0 U(0,\tau)P(\tau)(B(\tau)x(\tau)+y(\tau))d\tau.
\end{equation*}
Substituting $P(0)x_+$ into (\ref{x+}), this yields
	\begin{equation*}
	\begin{aligned}
		x(t)=&U(t,0)\int_ {-\infty}^0 U(0,\tau)P(\tau)(B(\tau)x(\tau)+y(\tau))d\tau+\int_0^tU(t,\tau)P(\tau)(B(\tau)x(\tau)+y(\tau))d\tau\\
		&-\int_t^\infty U(t,\tau)Q(\tau)(B(\tau)x(\tau)+y(\tau))d\tau\\
		=&\int_ {-\infty}  ^tU(t,\tau)P(\tau)(B(\tau)x(\tau)+y(\tau))d\tau
	 -\int_t^\infty U(t,\tau)Q(\tau)(B(\tau)x(\tau)+y(\tau))d\tau,
	\end{aligned}
\end{equation*}
which means that the solution $x$ satisfies (\ref{eqe3}). The case of $\R_-$ is similar.
The proof is completed.

\end{proof}

We need that the following lemma to establish the nonuniform exponential dichotomy on $\R$ in \cite[Theorem 3]{Dragivcevic2022}.
\begin{lemma}\label{thm1}
	Let $U(t,s)$ be an evolution family, $\beta>0$ and $\epsilon\geq0$. Suppose that the pairs $(Y_{\epsilon,\beta}(\R),\Gamma_{\beta }(\R))$, $(Y_{\epsilon,-\beta}(\R),\Gamma_{-\beta }(\R))$  and $(Y_{\epsilon,\beta|\cdot|}(\R),\Gamma_{ \beta|\cdot| }(\R))$ are properly admissible with respect to $U(t,s)$. Then $U(t,s)$ admits a nonuniform exponential dichotomy on $\R$ with exponent $\beta$ and bound $Ke^{\epsilon |s|}$.
\end{lemma}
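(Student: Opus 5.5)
Lemma \ref{thm1} is quoted from \cite[Theorem 3]{Dragivcevic2022}, so the intended proof is a reduction to that result. I would still sketch the standard argument behind it, which has three stages.

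\textbf{Step 1: construct the stable and unstable subspaces.} Using proper admissibility of $(Y_{\epsilon,\beta}(\R),\Gamma_\beta(\R))$ and $(Y_{\epsilon,-\beta}(\R),\Gamma_{-\beta}(\R))$, define for each $s\in\R$
$$
S(s)=\Big\{v\in X:\ \sup_{t\geq s}e^{\beta (t-s)}\|U(t,s)v\|<\infty\Big\},
$$
and let $N(s)$ be the set of $v$ admitting a backward solution $x(\cdot)$ with $x(s)=v$ and $\sup_{t\le s}e^{-\beta(t-s)}\|x(t)\|<\infty$.

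\textbf{Step 2: show $X=S(s)\oplus N(s)$ and set the projection.}
\begin{enumerate}
\item[{\rm (a)}] For the splitting, fix $v\in X$. Take $y(\tau)=\chi_{[s,s+1]}(\tau)\,U(\tau,s)v$, which lies in every class $Y_{\epsilon,\pm\beta}$ and $Y_{\epsilon,\beta|\cdot|}$.
\item[{\rm (b)}] Let $x$ be the unique admissible solution in $\Gamma_{\beta|\cdot|}(\R)$. Then $x(t)-\tfrac{t-s}{1}U(t,s)v$, suitably truncated, separates $v$ into a part decaying forward and a part decaying backward.
\item[{\rm (c)}] Uniqueness, i.e.\ properness, forces $S(s)\cap N(s)=\{0\}$.
\item[{\rm (d)}] Define $P(s)$ as the projection onto $S(s)$ along $N(s)$. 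Invariance $U(t,s)P(s)=P(t)U(t,s)$ and invertibility of $U(t,s)$ on $N(s)$ follow from the definitions and from uniqueness.
\end{enumerate}

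\textbf{Step 3: derive the exponential bounds.} Use the closed graph theorem on the map $y\mapsto x$. This gives a bounded operator with $\|x\|_{\beta}\le C\|y\|_{\epsilon,\beta}$, and similarly for the other two pairs.
\begin{enumerate}
\item[{\rm (a)}] Test with $y(\tau)=\chi_{[s,s+1]}(\tau)e^{-\varepsilon|\tau|}U(\tau,s)P(s)v$. The weight $e^{\varepsilon|\tau|}$ in the norm of $Y_{\epsilon,\beta}$ gives $\|y\|_{\epsilon,\beta}\lesssim e^{\varepsilon|s|}\|v\|$ after a local bound on $U$ over unit intervals.
\item[{\rm (b)}] The resulting solution is essentially $(t-s)U(t,s)P(s)v$ for $t\ge s+1$. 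The $\Gamma_\beta$ bound then gives $\|U(t,s)P(s)\|\le Ke^{-\beta(t-s)+\varepsilon|s|}$.
\item[{\rm (c)}] The unstable estimate follows symmetrically from the pair with $-\beta$.
\item[{\rm (d)}] The $|\cdot|$-weighted pair is what yields boundedness of $P(s)$ by $Ke^{\varepsilon|s|}$ on both half-lines simultaneously.
\end{enumerate}

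\textbf{Main obstacle.} The hardest step is the local growth bound on $\|U(t,s)\|$ for $|t-s|\le1$ with the correct nonuniform weight, together with controlling $\|P(s)\|$. Here I would first try to extract it from the admissibility operator's norm with a characteristic-function test input. In the paper itself I would simply cite \cite[Theorem 3]{Dragivcevic2022}, after checking that our function classes coincide with theirs.
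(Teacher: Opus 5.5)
Your proposal matches the paper. The paper gives no argument of its own for Lemma~\ref{thm1}; it simply imports the result from \cite[Theorem 3]{Dragivcevic2022}, exactly as you intend. Your caveat that one must check the function classes agree with those of the cited paper is the only real obligation on that route.

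The accompanying sketch, however, would not stand on its own as a proof.

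\textbf{Step 3 uses the two weights the wrong way round.} For $x\in\Gamma_\beta(\R)$ you only know $\|x(t)\|\le e^{\beta t}\|x\|_\beta$. For $y$ supported in $[s,s+1]$ you have $\|y\|_{\epsilon,\beta}\le e^{-\beta(s-1)}\int_s^{s+1}\|U(\tau,s)P(s)v\|\,d\tau$. So the $+\beta$ pair only gives a growth bound of order $e^{\beta(t-s)}$. The decaying stable estimate has to come from the $-\beta$ pair, which is consistent with your own definition of $S(s)$, and the unstable estimate from the $+\beta$ pair.

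\textbf{Step 3(a)--(b) has two bookkeeping errors.} For your test input, the admissible solution for $t\ge s+1$ is $c_sU(t,s)P(s)v$ with $c_s=\int_s^{s+1}e^{-\varepsilon|\tau|}\,d\tau$, not $(t-s)U(t,s)P(s)v$. Also, the factor $e^{-\varepsilon|\tau|}$ you put into $y$ cancels the weight in the $Y$-norm. The factor $e^{\varepsilon|s|}$ therefore comes from $c_s^{-1}$. Any further factor $e^{\varepsilon|s|}$ from a local bound on $U(\tau,s)P(s)$ would then spoil the claimed bound.

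\textbf{The more serious gap is Step 2(b).} The solution in $\Gamma_{\beta|\cdot|}(\R)$ only satisfies $\|x(t)\|\lesssim e^{\beta|t|}$. This shows that the forward orbit of $v+x(s)$ and the backward orbit of $x(s)$ grow at most like $e^{\beta|t|}$. It does not place these vectors in $S(s)$ and $N(s)$ as you defined them, since those require decay at rate $\beta$. Upgrading growth to decay is exactly where the three hypotheses must be played against each other.

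A scalar example shows this. Take $X=\R$ and $U(t,s)=e^{-\gamma(t-s)}$ with $\varepsilon=0$.
\begin{itemize}
\item The $\pm\beta$ pairs are both properly admissible whenever $\gamma\neq\pm\beta$. For instance, with $0<\gamma<\beta$ they are admissible, yet $S(s)=N(s)=\{0\}$.
\item The $\beta|\cdot|$ pair is properly admissible only when $|\gamma|>\beta$, because $e^{-\gamma t}\in\Gamma_{\beta|\cdot|}(\R)$ whenever $|\gamma|\le\beta$.
\end{itemize}
So the $\beta|\cdot|$ pair is what produces the splitting $X=S(s)\oplus N(s)$, not merely the bound on $\|P(s)\|$ you assign it in Step 3(d).

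Supplying this argument, together with the local growth bound you leave open, is precisely what the citation is doing for you.
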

\begin{theorem}\label{thm3.1}
	Let $U(t,s)$ admit a nonuniform exponential dichotomy with projection $P(s)$, exponent $\alpha$  and bound $Ke^{\epsilon|s|}$ such that $0\leq \epsilon<\alpha-\beta$. Let $b(t)=\|B(t)\|e^{\varepsilon |t|}$ satisfying
\begin{equation}\label{condition}
	\theta =\sup_{t\in\R} \int_{-\infty}^{+\infty} e^{-(\alpha-\beta)|t-\tau|}b(\tau)d\tau<1/K.
\end{equation}
 Then (\ref{eq:2.2}) admits a nonuniform exponential dichotomy on $\R$.
\end{theorem}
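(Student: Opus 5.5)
The plan is to verify the hypotheses of Lemma \ref{thm1} for the perturbed evolution family $V(t,s)$ generated by $A(t)+B(t)$. That is, we show that the three pairs $(Y_{\epsilon,\beta}(\R),\Gamma_\beta(\R))$, $(Y_{\epsilon,-\beta}(\R),\Gamma_{-\beta}(\R))$ and $(Y_{\epsilon,\beta|\cdot|}(\R),\Gamma_{\beta|\cdot|}(\R))$ are properly admissible with respect to $V$. Lemma \ref{Le3.1} reduces admissibility for $V$ to solvability of the fixed point equation (\ref{eqe3}) in the corresponding $\Gamma$-space. So for a fixed $y$ we define the operator
\begin{equation*}
(Tx)(t)=\int_{-\infty}^{\infty}G(t,\tau)\bigl(B(\tau)x(\tau)+y(\tau)\bigr)d\tau ,
\end{equation*}
and aim to show that $T$ is a well-defined contraction on $\Gamma_\beta(\R)$, on $\Gamma_{-\beta}(\R)$ and on $\Gamma_{\beta|\cdot|}(\R)$. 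Banach's fixed point theorem then gives existence and uniqueness of the solution, i.e.\ proper admissibility. Lemma \ref{Le3.2} together with the converse part of Lemma \ref{Le3.1} guarantees that fixed points of $T$ are exactly the solutions in the relevant class. Lemma \ref{thm1} then yields the dichotomy, with exponent $\beta$ and bound $K'e^{\epsilon|s|}$.

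\textbf{Step 1: the $y$-part of $T$.} For $x\in\Gamma_\beta(\R)$, split the integral at $\tau=t$. For $\tau\le t$ use $\|U(t,\tau)P(\tau)\|\le Ke^{-\alpha(t-\tau)+\epsilon|\tau|}$, and for $\tau>t$ use the $Q$ estimate. Bound the $y$-part by cutting into unit intervals, exactly as in the proof of Lemma \ref{Le3.1}. This gives
\begin{equation*}
e^{-\beta t}\Bigl\|\int G(t,\tau)y(\tau)d\tau\Bigr\|\le C\|y\|_{\epsilon,\beta},
\end{equation*}
with $C$ a geometric sum in $e^{-(\alpha-\beta)}$. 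The forward part converges because $\alpha-\beta>0$. The backward part involves $e^{(\alpha+\beta)(t-\tau)}$-type weights that also decay, since $\alpha>\beta$. The analogous computation for weight $-\beta$ uses $\alpha+\beta>\alpha-\beta$. For the weight $\beta|\cdot|$ one uses $|\tau|\le|t|+|t-\tau|$.

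\textbf{Step 2: the $B$-part and the contraction estimate.} Write $\|B(\tau)\|e^{\epsilon|\tau|}=b(\tau)$ and $\|x(\tau)\|\le e^{\beta\tau}\|x\|_\beta$. The $\tau\le t$ piece is then bounded by $Ke^{\beta t}\|x\|_\beta\int_{-\infty}^t e^{-(\alpha-\beta)(t-\tau)}b(\tau)d\tau$. The $\tau>t$ piece is bounded by $Ke^{\beta t}\|x\|_\beta\int_t^\infty e^{-(\alpha+\beta)(\tau-t)}b(\tau)d\tau$, which is at most the same integral with exponent $\alpha-\beta$. Adding the two gives
\begin{equation*}
\|Tx_1-Tx_2\|_\beta\le K\theta\|x_1-x_2\|_\beta .
\end{equation*}
By (\ref{condition}), $K\theta<1$, so $T$ is a contraction. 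The same bookkeeping works for $-\beta$, with the roles of the two half-lines swapped, and for $\beta|\cdot|$ via $e^{\beta|\tau|}\le e^{\beta|t|}e^{\beta|t-\tau|}$. In each case the kernel is dominated by $e^{-(\alpha-\beta)|t-\tau|}$.

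\textbf{Main obstacle.} The main issue is whether Lemma \ref{thm1} can be applied to $V$ at all. It requires the admissibility relation in the form $x(t)=V(t,s)x(s)+\int_s^tV(t,\tau)y(\tau)d\tau$, whereas Lemma \ref{Le3.1} characterizes solutions of (\ref{Eq}). I would bridge this via the standard variation of constants identity linking $V$ and $U$: a function satisfies the $V$-formula iff it satisfies the $U$-formula with forcing $Bx+y$. A secondary point is continuity of $\|Tx(\cdot)\|$, which follows from the strong continuity of $U$ and dominated convergence. The resulting exponent is $\beta$ rather than $\alpha$, which is consistent with the loss of $\alpha-\beta$ used to absorb the perturbation.
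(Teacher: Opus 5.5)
Your proposal is correct and is essentially the paper's argument: the map $x\mapsto\int_{\R}G(t,\tau)(B(\tau)x(\tau)+y(\tau))\,d\tau$ is a contraction with constant $K\theta<1$ on each of the three weighted spaces, Lemma \ref{Le3.1} identifies its fixed point as the unique solution in that class, and Lemma \ref{thm1} yields the dichotomy. The one structural difference is how the perturbed family is obtained. The paper first builds $U_B$ by a separate fixed point in the weighted operator space $\mathscr C$ seeded with $U(t,s)P(s)$; for $B=0$ that fixed point is just $U(t,s)P(s)$, not an evolution family. You instead take the perturbed family from the standard Volterra equation $V(t,s)=U(t,s)+\int_s^tU(t,\tau)B(\tau)V(\tau,s)\,d\tau$ and make the $V$/$U$ variation-of-constants equivalence explicit, which is the sounder justification.

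One slip to fix: in Step 2 the exponents are interchanged. For $\tau\le t$ one has $e^{-\alpha(t-\tau)+\beta\tau}=e^{\beta t}e^{-(\alpha+\beta)(t-\tau)}$, and for $\tau>t$ one has $e^{-\alpha(\tau-t)+\beta\tau}=e^{\beta t}e^{-(\alpha-\beta)(\tau-t)}$. Both are still dominated by $e^{\beta t}e^{-(\alpha-\beta)|t-\tau|}$, so the contraction constant $K\theta$ is unaffected.
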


\begin{proof}

We first  consider a space
$$
\mathscr 	C=\{u:\Delta_J\to L(X),~~U~\text{is continuous} \},
$$
with norm
$$
\|U\|_*=\sup_{(t,s)\in\Delta_J}\|U(t,s)\|e^{\beta(t-s)-\epsilon|s|}<+\infty.
$$
It is observe that $\mathscr C$ is a Banach space.
Set an operator $\mathscr F$ on $\mathscr C$ by
$$
\mathscr F U_B(t,s)=U(t,s)P(s)+\int_s^\infty G(t,\tau)B(\tau)U_B(\tau,s)d\tau.
$$
 Moreover, we get
\begin{align*}
	\|\mathscr FU_B(t,s)\|\leq & Ke^{-\alpha(t-s)+\epsilon|s|}+K \int_s^t e^{-\alpha(t-\tau) }b(\tau) \|U_B(\tau,s)\|d\tau\\
	&+\int_t^\infty e^{-\alpha( \tau-t) }b(\tau) \|U_B(\tau,s)\|d\tau\\
	\leq&Ke^{-\alpha(t-s)+\epsilon|s|}+Ke^{-\beta(t-s)+\epsilon|s|} \int_s^t e^{-(\alpha-\beta)(t-\tau) }b(\tau) d\tau \|U_B\|_*\\
	&+Ke^{-\beta(t-s)+\epsilon|s|} \int_ t^\infty e^{-(\alpha-\beta)( \tau-t) -2\beta(\tau-t)}b(\tau) d\tau \|U_B\|_*,
\end{align*}	
This shows that
\begin{align}\label{inequa}
	\|\mathscr FU_B \|_*
	\leq Ke^{-(\alpha-\beta)(t-s) }+K\theta\|U_B\|_*.
\end{align}	
Therefore, $\mathscr T:\mathscr C\to\mathscr  C$ is well defined. For any $U_B',U_B''\in\mathscr  C$, it yields
\begin{align*}
\|\mathscr F U_B''(t,s)-\mathscr F U_B'(t,s)\|\leq &\int_s^t\| U(t,\tau)P(\tau)B(\tau)(U_B''(\tau,s)-  U_B'(t,s))\|d\tau\\
&+\int_t^\infty \| U(t,\tau)Q(\tau)B(\tau)(U_B''(\tau,s)-  U_B'(t,s))\|d\tau\\
\leq&
K\int_s^t e^{-\alpha(t-\tau) }b(\tau)e^{-\beta(\tau-s)+\epsilon |s|} d\tau \|U_B'' -  U_B'\|_*\\
&+K\int_t^\infty e^{-\alpha(\tau-t) }b(\tau)e^{-\beta(\tau-s)+\epsilon |s|} d\tau \|U_B'' -  U_B'\|_*\\
=& K  e^{-\beta(t-s)+\epsilon|s|} \int_s^\infty e^{-(\alpha-\beta)|t-\tau| }b(\tau) d\tau\|U_B'' -  U_B'\|_*,
\end{align*}
this implies $
	\|\mathscr F U_B'' -\mathscr F U_B' \|_* \leq   K\theta  \|U_B'' -  U_B'\|_*, $
which is
a contraction in view of $K\theta<1$. Hence, there is a unique $U_B\in \mathscr C$ such that $\mathscr F U_B=U_B$ and
\begin{equation}\label{identity2}\begin{aligned}
	U_B(t,s)&=U(t,s)P(s)+\int_s^t U(t,\tau)P(\tau)B(\tau)U_B(\tau,s)d\tau\\ &-\int_t^\infty U(t,\tau)Q(\tau)B(\tau)U_B(\tau,s)d\tau.
\end{aligned}\end{equation}

Therefore, since $t\mapsto U(t,s)$ is differentiable in $L(X)$, for some $U_B\in\mathscr C$, $t\mapsto U_B(t,s)$ is differentiable in $L(X)$,  we have
\begin{equation*}\label{identity22}\begin{aligned}
	\partial_t 	U_B(t,s)=&A(t)U(t,s)P(s)+A(t)\int_s^t U(t,\tau)P(\tau)B(\tau)U_B(\tau,s)d\tau\\ &-A(t)\int_t^\infty U(t,\tau)Q(\tau)B(\tau)U_B(\tau,s)d\tau\\
	&+P(t)B(t)U_B(t,s)+Q(t)B(t)U_B(t,s)
	= (A(t)+B(t))U_B(t,s),
\end{aligned}\end{equation*}
which shows that $t\mapsto U_B(t,s)x$, $t\geq s$, is a solution of
(\ref{eq:2.2}) for every $x\in X$.
From the results in \cite{Dragivcevic2022}, we note that
$$
\|U(t,s)v\|\leq Ke^{-\alpha(t-s){\rm sgn}(t-s)+\epsilon|s|}\|v\|,~~\forall t,s\in\R,~~v\in S(s),~or~v\in U(s),
$$
where $S(s)$ is the stable subspace and $U(s)$ is the unstable subspace such that $X=S(s)+U(s)$,~$\forall s\in \R$.
 Hence, by using the aforementioned arguments, the solution of (\ref{identity2}) is equivalent to
\begin{equation}\label{identity}
	U_B(t,s)=U(t,s)+\int_s^t U(t,\tau)B(\tau)U_B(\tau,s)d\tau,
\end{equation}
for $t\geq s,\in \R$. It is clear from (\ref{identity}) that $U_B(t,t) = U(t,t)  =I$.
A similar statement of operator $\mathscr F$ shows that the equation
$$
f(t)=\int_r^tU(t,\tau)B(\tau)f(\tau)d\tau,
$$
just admits a zero solution for every $t\in \R$ and some $f\in\mathscr C$ because of $f(r)=0$. By the evolution property of $U(t,s)$, a simple calculation shows that $f(t)=U_B(t,\tau)U_B(\tau,s)-U_B(t,s)$ fulfills the above equation for every $t\geq\tau\geq s,\in \R$.
Hence, one can sufficiently verify $U_B(t,\tau)U_B(\tau,s)=U_B(t,s)$ for $t\geq\tau\geq s,\in \R$.	
We conclude that $\{U_B(t,s)\}_{t\geq s,\in \R}$ is an evolution family of
(\ref{eq:2.2}).

In the sequel, our strategy is to use Lemma \ref{thm1} to give the existence of a nonuniform exponential dichotomy for evolution family $U_B(t,s)$. Therefore, we need to verify the assumptions both satisfied, that is, the pair ($Y_{\epsilon,\beta}(\R),\Gamma_{\beta}(\R)$), ($Y_{\epsilon,-\beta}(\R),\Gamma_{-\beta}(\R)$) and ($Y_{\epsilon,\beta|\cdot|}(\R),\Gamma_{\beta|\cdot|}(\R)$) are both properly admissible with respect to $U_B(t,s)$.

It suffices to prove that for each $y\in Y_{\epsilon,\beta}(\R)$, there exists a unique function $x\in \Gamma_{\beta }(\R)$ such that
	(\ref{eqe3}) holds, because the other two cases can use the similar arguments ot obtain the desired conclusions.
	For given $y\in Y_{\epsilon,\beta}(\R)$, define a mapping
	$\phi_y:\Gamma_{\beta }(\R)\to \Gamma_{\beta }(\R)$ as
	\begin{align*}
		\phi_yx(t)=&\int_{-\infty}^{+\infty}G(t,\tau)(B(\tau)x(\tau)+y(\tau))d\tau,
	\end{align*}
where $G$ is the Green function associated the nonuniform exponential dichotomy
	possessing the dichotomy estimation
	$$
	\|G(t,s)\|\leq Ke^{-\alpha|t-s|+\epsilon|s|},~~\forall t,s\in\R,
	$$
	This mapping is well defined because for $x\in \Gamma_{\beta}(\R)$ and
	\begin{align*}
		\|\phi_yx(t)\|\leq &\left(\int_{-\infty}^{t}+\int_t^{+\infty}\right)\| G(t,\tau)\|( \|B(\tau)x(\tau)\|+\|y(\tau)\|)d\tau\\
		\leq &K\int_{-\infty}^{t}e^{-\alpha(t-\tau)+\epsilon|\tau|} ( b(\tau)e^{-\epsilon|\tau|}\|x(\tau)\|+\|y(\tau)\|)d\tau\\ &+K\int_{t}^{+\infty} e^{-\alpha(\tau-t)+\epsilon|\tau|} ( b(\tau)e^{-\epsilon|\tau|}\|x(\tau)\|+\|y(\tau)\|)d\tau
		=I_1+I_2.
	\end{align*}
	As for $I_1$, we have
	\begin{align*}
		I_1 \leq &K \int_{-\infty}^{t}e^{-\alpha(t-\tau) }   b(\tau)e^{\beta  \tau }\|x\|_\beta d\tau
		+K\int_{-\infty}^{t}e^{-\alpha(t-\tau)+\epsilon|\tau|} \|y(\tau)\| d\tau\\
		\leq& K e^{\beta t}\int_{-\infty}^{t}  e^{-( \alpha-\beta) (t-  \tau) }b(\tau) d\tau \|x\|_\beta
		+Ke^{- \alpha t}\sum_{k=-\infty}^{[t]}\int_k^{k+1}e^{\alpha\tau+\epsilon|\tau|}\|y(\tau)\|d\tau.
	\end{align*}
	Since
	$$\int_{-\infty}^{t}  e^{- ( \alpha-\beta) (t-  \tau) }b(\tau) d\tau\leq \theta,$$
	 and
	\begin{align*}\textsf{}
		\sum_{k=-\infty}^{[t]}e^{\alpha(k+1)+\beta k} =& \frac{e^{\alpha+(\alpha+\beta) [t]}}{1-e^{-(\alpha+\beta)}}\leq \frac{e^{(\alpha+\beta) t+\alpha} }{1-e^{-(\alpha+\beta)}},
	\end{align*}
	we have
	\begin{align*}
		I_1 \leq & Ke^{\beta t}\theta  \|x\|_\beta +Ke^{- \alpha t}\sum_{k=-\infty}^{[t]}e^{ \alpha (k+1)+\beta k} \|y\|_{\epsilon,\beta}
		\leq   Ke^{\beta t}\theta   \|x\|_\beta + K \|y\|_{\epsilon,\beta} \frac{e^{\alpha+\beta t} }{1-e^{-(\alpha+\beta)}}.
	\end{align*}
	Furthermore,
	\begin{align*}
		I_2\leq&K \int_{t}^{+\infty}e^{-\alpha(\tau-t) }  b(\tau) e^{\beta  \tau }\|x\|_\beta d\tau  +K\int_{t}^{+\infty} e^{-\alpha(\tau-t)+\epsilon|\tau|}  \|y(\tau)\| d\tau\\
		\leq &e^{\beta t}\theta\|x\|_\beta   +K\sum_{k=0}^\infty\int_{t+k}^{t+k+1} e^{-\alpha(\tau-t)+\epsilon|\tau|}  \|y(\tau)\| d\tau,
	\end{align*}
since
	\begin{align*}
		\sum_{k=0}^\infty\int_{t+k}^{t+k+1} e^{-\alpha(\tau-t)+\epsilon|\tau|}  \|y(\tau)\| d\tau\leq &e^{\beta t } \sum_{k=0}^\infty e^{-(\alpha-\beta) k} \|y\|_{\epsilon,\beta}=\frac{e^{\beta t }}{1-e^{-(\alpha-\beta)}}\|y\|_{\epsilon,\beta}.
	\end{align*}
	Therefore, we obtain
	\begin{align*}
		I_2\leq& Ke^{\beta t}\theta\|x\|_\beta   + \frac{Ke^{\beta t }}{1-e^{-(\alpha-\beta)}}\|y\|_{\epsilon,\beta}.
	\end{align*}
	Together these arguments,
	\begin{align*}
		\|\phi_yx \|_\beta\leq &2K\theta\|x\|_\beta  + K\|y\|_{\epsilon,\beta}\left(\frac{e^\alpha}{1-e^{-(\alpha+\beta) }}+\frac{1}{1-e^{-(\alpha-\beta)}}\right).
	\end{align*}
	This implies that $\phi_y(\Gamma_{\beta}(\R))\subset \Gamma_{\beta}(\R)$. For arbitrary $u, v\in \Gamma_{\beta}(\R)$,
	\begin{align*}
		\|\phi_yu(t)-\phi_yv(t)\|\leq &\int_{-\infty}^{+\infty}\| G(t,\tau)\|  \|B(\tau)(u(\tau)-v(\tau))\| d\tau\\
		\leq & K \int_{-\infty}^{+\infty}e^{-\alpha|t-\tau|+\beta\tau}b(\tau)  d\tau\|u-v\|_\beta\\
		\leq & K\theta  e^{\beta t}  \|u-v\|_\beta,
	\end{align*}
	we thus have
	\begin{align*}
		\|\phi_yu -\phi_yv \|_\beta
		\leq &   K\theta \|u-v\|_\beta.
	\end{align*}
	Thus, $\phi_y$ is a contraction in the Banach space $\Gamma_{\beta}(\R)$ for $  K\theta <1$. Therefore, $\phi_y$ has a unique fixed point $x\in \Gamma_{\beta}(\R)$ such that
\begin{equation}\label{equ-i}
	x_y(t)=\phi_y x_y(t)=\int_{-\infty}^{+\infty}G(t,\tau)(B(\tau)x_y(\tau)+y(\tau))d\tau.
\end{equation}
	By lemma \ref{Le3.1}, the function $x_y$ is a unique solution of
\begin{equation}\label{equ-id}
	x'=(A(t)+B(t))x+y
\end{equation}
in $ \Gamma_{\beta}(\R)$. In particular, by the variation of constants formula, the solution of (\ref{equ-id}) is given by
\begin{equation}\label{equ-idd}
	x(t)=U_B(t,s)x(s)+\int_s^t U_B(t,\tau)y(\tau)d\tau.
\end{equation}
In fact,
since $t\mapsto U_B(t,s)$ is differentiable as $\partial_tU_B(t,s)=(A(t)+B(t))U_B(t,s)$ in $L(X)$,
if (\ref{equ-idd}) holds for $x_y\in \Gamma_{\beta}(\R)$ associated with $y\in Y_{\epsilon,\beta}(\R)$,
we obtain that
\begin{align*}
	x'(t)=& (A(t)+B(t))U_B(t,s)x(s)+(A(t)+B(t))\int_s^t U_B(t,\tau) y(\tau)d\tau+y(t)\\
	=&(A(t)+B(t))x(t)+y(t).
\end{align*}
It means that the solution of (\ref{equ-idd}) is that of (\ref{equ-id}), and then it is also a unique solution of (\ref{equ-i}) by the uniqueness. Thus, $(Y_{\epsilon,\beta}(\R),   \Gamma_{\beta}(\R))$ is admissible. The proof is complete.

\end{proof}

Let $J=\R_+$. For a closed subspace $Z$ of $X$, we define
$$\Gamma_{ \beta,Z}=\{x\in \Gamma_{\beta}(J):~x(0)\in Z\}.$$
Then $\Gamma_{ \beta,Z}$ is a closed subspace of $(\Gamma_{\beta}(J),\|\cdot\|_\beta)$.
We next use the following result to establish the nonuniform exponential dichotomy on $\R_+$ in \cite[Theorem 4]{Dragivcevic2022}.
\begin{lemma}\label{thm2}
	Let $U(t,s)$ be evolution family, $Z$ be a closed subspace of $X$, and $\beta>0$ and $\epsilon\geq0$. Suppose that the pairs $(Y_{\epsilon,\beta}(J),\Gamma_{\beta,Z}(J))$  and $(Y_{\epsilon,-\beta}(J),\Gamma_{-\beta,Z}(J))$ are properly admissible with respect to $U(t,s)$, for $J=\R_+$. Then $U(t,s)$ admits a nonuniform exponential dichotomy on $J$ with the bound $Ke^{\epsilon |t|}$.
\end{lemma}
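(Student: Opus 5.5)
The approach is the classical Perron method: bounded solution operators from admissibility, a splitting built from $Z$, and then exponential estimates read off from operator norms. We follow the scheme of \cite[Theorem 4]{Dragivcevic2022}.

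\emph{Step 1 (bounded solution operators).} Proper admissibility of $(Y_{\epsilon,\beta}(J),\Gamma_{\beta,Z}(J))$ gives a well-defined map $T_\beta:y\mapsto x$. Uniqueness makes it linear. The graph of $T_\beta$ is closed: if $y_n\to y$ in $\|\cdot\|_{\epsilon,\beta}$ and $x_n\to x$ in $\|\cdot\|_\beta$, then we can pass to the limit in the variation of constants formula on compact intervals, and $x(0)\in Z$ because $Z$ is closed. Since $\Gamma_{\beta,Z}$ is a closed subspace, the closed graph theorem yields $\|T_\beta y\|_\beta\le M\|y\|_{\epsilon,\beta}$. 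In the same way $\|T_{-\beta}y\|_{-\beta}\le M\|y\|_{\epsilon,-\beta}$.

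\emph{Step 2 (splitting).} Set $S(0)=\{v\in X:\ t\mapsto U(t,0)v\in\Gamma_{-\beta}(J)\}$.

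1. For $v\in S(0)\cap Z$, the function $U(\cdot,0)v$ lies in $\Gamma_{-\beta,Z}$ and solves the problem with $y=0$. Uniqueness forces $v=0$, so $S(0)\cap Z=\{0\}$.
2. For arbitrary $v\in X$, fix a smooth $\varphi\ge0$ supported in $[0,1]$ with $\int\varphi=1$, and put $\psi(t)=\int_t^\infty\varphi$. The function $w(t)=\psi(t)U(t,0)v$ has compact support. It satisfies the mild equation with $y=-\varphi\,U(\cdot,0)v\in Y_{\epsilon,-\beta}$.
3. Let $x=T_{-\beta}y$. Then $w-x$ is a homogeneous solution lying in $\Gamma_{-\beta}$. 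Hence $v-x(0)\in S(0)$ and $x(0)\in Z$, so $X=S(0)\oplus Z$.

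We then transport the splitting, defining $S(s)=U(s,0)S(0)$. We take $N(P(s))$ to be the set of $v$ for which the mild equation on $[0,s]$ with endpoint value $v$ has a solution starting in $Z$. This gives invariance (i) at once. Invertibility (ii) on the unstable part follows from uniqueness together with the same cut-off construction run backwards on $[0,s]$.

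\emph{Step 3 (estimates).} For $v\in S(s)$, apply $T_{-\beta}$ to the shifted forcing $y=-\varphi(\cdot-s)U(\cdot,s)v$. The solution is exactly $\psi(t-s)U(t,s)v$, so its $\Gamma_{-\beta}$ norm controls $\sup_{t\ge s}e^{\beta t}\|U(t,s)v\|$. We also have
$$\|y\|_{\epsilon,-\beta}\lesssim e^{\beta s+\epsilon|s|}\sup_{\tau\in[s,s+1]}\|U(\tau,s)v\|.$$
Combining these gives $\|U(t,s)v\|\le Ke^{-\beta(t-s)+\epsilon|s|}\|v\|$. The unstable estimate comes symmetrically from $T_\beta$ applied to backward cut-offs. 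Finally, we bound $\|P(s)\|$ by applying the decomposition argument of Step 2 at time $s$.

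\emph{Main obstacle.} The hardest step is the local bound $\sup_{\tau\in[s,s+1]}\|U(\tau,s)\|\le Ce^{\epsilon|s|}$, i.e.\ nonuniform bounded growth, together with the resulting norm bound on the projections. Without it, the forcing norms in Step 3 are not controlled by $\|v\|$. The first thing we would try is to extract it from admissibility itself. Apply $T_{\pm\beta}$ to forcings concentrated on $[s,s+1]$, and compare the result with the explicit solution obtained from the variation of constants formula. This yields $\|U(\tau,s)v\|$ on unit intervals in terms of $M$ and the weight $e^{\epsilon|s|}$, via a Gronwall/Datko-type argument.
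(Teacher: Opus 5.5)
The paper gives no proof of this lemma; it quotes it from \cite[Theorem 4]{Dragivcevic2022}, so your argument has to stand on its own. Step 1 and the splitting $X=S(0)\oplus Z$ are fine. The argument breaks at the step you call the main obstacle, and that obstacle cannot be overcome, because the hypotheses do \emph{not} imply $\sup_{\tau\in[s,s+1]}\|U(\tau,s)\|\le Ce^{\epsilon|s|}$. Take $X=\R$, $Z=X$ and $U(t,s)=\exp\int_s^t a(r)\,dr$, where $a$ is continuous, $a\ge\alpha>\beta$, and $a$ has narrow spikes with $\int_n^{n+1}a(r)\,dr\ge n^2$. For every $y\in Y_{\epsilon,\pm\beta}(\R_+)$, the function $x(t)=-\int_t^\infty U(t,\tau)y(\tau)\,d\tau$ lies in $\Gamma_{\pm\beta}$ and solves the mild equation. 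It is the only such solution, since nonzero homogeneous solutions grow at least like $e^{\alpha t}$. So both pairs are properly admissible, and $P=0$ gives a dichotomy, yet $U(n+1,n)\ge e^{n^2}$. Hence no Gronwall/Datko argument can produce the local bound. As you note yourself, without it the norms of your unit-length forcings $\varphi(\cdot-s)U(\cdot,s)v$ are not controlled by $\|v\|$ once $v$ has an unstable component. That is exactly the case you need for the unstable estimate and for $\|P(s)\|$. Even granting the bound, it would combine with the factor $e^{\epsilon|s|}$ already present in $\|y\|_{\epsilon,-\beta}$ to give $e^{2\epsilon|s|}$, not the $Ke^{\epsilon|s|}$ you claim.

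The missing idea is to use the local-$L^1$ structure of $Y_{\epsilon,\pm\beta}$ and shrink the support of the forcing. For $y_h=\tfrac1h\mathbf{1}_{[s,s+h]}U(\cdot,s)v$ one has $\int_s^tU(t,\tau)y_h(\tau)\,d\tau=U(t,s)v$ for $t\ge s+h$. Strong continuity alone gives $\limsup_{h\to0}\|y_h\|_{\epsilon,-\beta}\le e^{(\beta+\epsilon)s}\|v\|$ and $\limsup_{h\to0}\|y_h\|_{\epsilon,\beta}\le e^{\beta}e^{(\epsilon-\beta)s}\|v\|$. For $x=T_{-\beta}y_h$ (or $T_\beta y_h$) you get $x(s)\in U(s,0)Z$ and $x(s)+v\in S(s)$; here the $\Gamma_\beta$- and $\Gamma_{-\beta}$-stable spaces coincide, since one contains the other and both complement $U(s,0)Z$. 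Hence $x(s)=-Q(s)v$, $x(t)=U(t,s)P(s)v$ for $t\ge s+h$, and $x(t)=-U(t,s)Q(s)v$ for $t\le s$. Letting $h\to0$ gives $\|U(t,s)P(s)\|\le(1+M)e^{-\beta(t-s)+\epsilon s}$ and $\|U(t,s)Q(s)\|\le Me^{\beta}e^{-\beta(s-t)+\epsilon s}$. This needs no growth assumption and yields exactly the bound $e^{\epsilon s}$ claimed in the lemma.

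Two further points must be corrected. First, $S(s)=U(s,0)S(0)$ fails for noninvertible families. For the nilpotent left-shift semigroup on $L^2(0,1)$, $S(0)=X$ and $Z=\{0\}$, but $U(s,0)X\neq X$. You must set $S(s)=\{v:\sup_{t\ge s}e^{\beta(t-s)}\|U(t,s)v\|<\infty\}$ and prove $X=S(s)\oplus U(s,0)Z$ at every $s$; the $y_h$ construction does this directly. Second, in Step 3 the $\Gamma_{-\beta,Z}$-solution for $v\in S(s)$ is not $\psi(t-s)U(t,s)v$. That function vanishes for $t\ge s+1$, so it carries no decay information, and it has no continuation to $[0,s]$ starting in $Z$. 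The actual solution is $-(1-\psi(t-s))U(t,s)v$ on $[s,\infty)$, extended by $0$ on $[0,s]$.
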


\begin{theorem}\label{the3.2}
	Let $U(t,s)$ admit a nonuniform exponential dichotomy with exponent $\alpha$, bound $Ke^{\epsilon|s|}$ such that $0\leq \epsilon<\alpha-\beta$, and $Z$ be a closed subspace of $X$.  Let $b(t)=\|B(t)\|e^{\varepsilon |t|}$ satisfying (\ref{condition}).
Then (\ref{eq:2.2}) admits a nonuniform exponential dichotomy on $\R_+$.
\end{theorem}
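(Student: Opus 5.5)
The proof will mirror that of Theorem \ref{thm3.1}, with Lemma \ref{thm2} taking the place of Lemma \ref{thm1}. The subspace $Z$ will be the unstable space of the unperturbed dichotomy at time $0$, namely $Z=N(P(0))=Q(0)X$, which is closed.

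\textbf{Step 1: the perturbed evolution family on $J=\R_+$.} I define $\mathscr F$ on $\mathscr C$ over $\Delta_{\R_+}$ by the same formula as before, restricted to $\tau\geq 0$. The estimate (\ref{inequa}) and the contraction estimate carry over verbatim, because condition (\ref{condition}) is a supremum over all of $\R$ and hence controls integrals over $[s,\infty)\subset\R_+$. This gives $U_B(t,s)$ on $\Delta_{\R_+}$ satisfying (\ref{identity}), and uniqueness for the homogeneous Volterra equation gives the evolution property.

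\textbf{Step 2: admissibility of $(Y_{\epsilon,\beta}(\R_+),\Gamma_{\beta,Z})$.} Given $y\in Y_{\epsilon,\beta}(\R_+)$, I work on the closed subspace $\Gamma_{\beta,Z}$ with the map
$$\psi_y x(t)=\int_0^t U(t,\tau)P(\tau)(B(\tau)x(\tau)+y(\tau))d\tau-\int_t^\infty U(t,\tau)Q(\tau)(B(\tau)x(\tau)+y(\tau))d\tau .$$
This is (\ref{eqe}) with $s=0$ and $P(0)x_0=0$. Evaluating at $t=0$ gives $\psi_y x(0)=-\int_0^\infty U(0,\tau)Q(\tau)(\cdots)d\tau\in Q(0)X=Z$, so $\psi_y$ maps $\Gamma_{\beta,Z}$ into itself. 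The bounds $I_1,I_2$ from the proof of Theorem \ref{thm3.1} (now with lower limit $0$) give $\|\psi_y x\|_\beta\le 2K\theta\|x\|_\beta+C\|y\|_{\epsilon,\beta}$. The Lipschitz bound is $K\theta\|u-v\|_\beta$, since $\int_0^\infty e^{-\alpha|t-\tau|+\beta\tau}b(\tau)d\tau\le\theta e^{\beta t}$. Because $K\theta<1$, $\psi_y$ has a fixed point, and by Lemma \ref{Le3.1} this fixed point is a solution of (\ref{Eq}).

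For uniqueness I take two solutions in $\Gamma_{\beta,Z}$ and let $w$ be their difference. Then $w$ solves the homogeneous perturbed equation, so by Lemma \ref{Le3.1} it satisfies (\ref{eqe}) with $y=0$ and some $x_0$. Since $w(0)\in Z$, we get $P(0)w(0)=0$, so the term $U(t,0)P(0)x_0$ vanishes and $w=\psi_0 w$. The contraction then forces $w=0$. As in Theorem \ref{thm3.1}, the fixed point also satisfies the variation of constants formula with $U_B$, which gives proper admissibility.

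\textbf{Step 3: the pair with $-\beta$, and the main obstacle.} The same argument has to be run for $(Y_{\epsilon,-\beta},\Gamma_{-\beta,Z})$. This is where I expect the difficulty. With weight $e^{\beta t}$ the term $\int_0^t$ gives $\int_0^t e^{-\alpha(t-\tau)-\beta\tau}b(\tau)d\tau$. Against $e^{-\beta t}$ this produces the kernel $e^{-(\alpha-\beta)(t-\tau)}$, which is acceptable. The backward part gives $e^{-\beta t}\int_t^\infty e^{-(\alpha+\beta)(\tau-t)}b\,d\tau\le e^{-\beta t}\theta$, which is also acceptable. Likewise, the $y$-sums converge with ratios $e^{-(\alpha-\beta)}$ and $e^{-(\alpha+\beta)}$, using $\beta<\alpha$. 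The exponents therefore work in both directions, but each kernel has to be checked to be dominated by $e^{-(\alpha-\beta)|t-\tau|}$, and the constants in the $y$-part must stay finite. Once Step 3 is done, Lemma \ref{thm2} applies to $U_B$ and gives a nonuniform exponential dichotomy of (\ref{eq:2.2}) on $\R_+$.
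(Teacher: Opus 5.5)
Your proposal is the paper's proof specialized to $Z=Q(0)X$, which is legitimate since the conclusion does not involve $Z$, and this choice is exactly what makes the argument work: on $\Gamma_{\pm\beta,Q(0)X}$ the paper's map $\phi_y$ loses its term $U(t,0)P(0)x(0)$ and becomes your $\psi_y$, whereas for a general closed $Z$ the paper's map need not preserve $\Gamma_{\beta,Z}$ (e.g.\ $Z=\{0\}$, since $\phi_y x(0)$ generally has a nonzero $Q(0)$-component) or have a unique fixed point (e.g.\ $Z=X$, where $P(0)x(0)$ is left free), so your explicit uniqueness argument and your check of the $-\beta$ kernels supply what the paper's sketch omits. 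The one inaccuracy, inherited from the paper, is in Step 1: the fixed point of $\mathscr F$ in $\mathscr C$ satisfies (\ref{identity2}), not (\ref{identity}) (for $B=0$ it is $U(t,s)P(s)$ rather than $U(t,s)$), so $U_B$ should instead be obtained by solving the Volterra equation (\ref{identity}) directly on compact intervals, using local boundedness of $\|U(t,s)\|$ and the local integrability of $\|B\|$ implied by (\ref{condition}), and that is all your Steps 2--3 actually use.
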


\begin{proof}
Let $J=\R_+$, a similar argument in Theorem \ref{thm3.1}, by the variation of constants formula, there is a unique $U_B\in\mathscr C$ such that
(\ref{identity}) fulfills, and then it is an evolution family and $U_B(t,s)y$ is a solution of (\ref{eq:2.2}) for all $y\in X$. By the strategy is to use Lemma \ref{thm2} to give verify  that is the pairs ($Y_{\epsilon,\beta}(J),\Gamma_{\beta,Z}(J)$) is properly admissible with respect to $U_B(t,s)$, and the case of ($Y_{\epsilon,-\beta}(J),\Gamma_{-\beta,Z}(J)$) follows from the same way.

For given $y\in Y_{\epsilon,\beta}(J)$, define a mapping
$\phi_y:\Gamma_{\beta,Z}(J)\to \Gamma_{\beta,Z}(J)$ as
\begin{align*}
	\phi_yx(t)=&U(t,0)P(0)x(0)+\int_{0}^{+\infty}G(t,\tau)(B(\tau)x(\tau)+y(\tau))d\tau.
\end{align*}
It follows from the proof of Theorem \ref{thm3.1} that $\phi_y$ has a unique fixed point $x\in \Gamma_{\beta,Z}(J)$ such that
$$
x_y(t)=\phi_y x_y(t)=U(t,0)P(0)x(0)+\int_{0}^{+\infty}G(t,\tau)(B(\tau)x_y(\tau)+y(\tau))d\tau.
$$
By Lemma \ref{Le3.1}, the function $x_y$ is a unique solution of $x'=(A(t)+B(t))x+y$
in $ \Gamma_{\beta,Z}(J)$, which implies that $(Y_{\epsilon,\beta}(J),   \Gamma_{\beta,Z}(J))$ is admissible with respect to $U_B$. The proof is complete.
	
\end{proof}

It is notice that if the closed subspace of $X$
	$$
W(0)=\{v\in X:~~U(t,s)~\text{has a bounded negative continuation at}~(0,v)\},
	$$
that is, $W(0)$ is the set of all $v\in X$ such that there exists $x:(-\infty,0]\to X$ continuous and $x(0)=v$, $x(t_1)=U(t_1,t_2)x(t_2)$ for $t_2\leq t_1\leq 0$, $\sup_{t\leq 0}\|x(t)\|<+\infty$., such that $X=Z+W(0)$ and $U(t,s)$ is not invertible in the case of $J=\R_-$,  \cite[Theorem 4]{Dragivcevic2022} also shows that  (\ref{eq:2.2}) admits nonuniform exponential dichotomy on $\R_-$. This allows us to provide the following fact.
 \begin{remark}

By the same method as in Theorem \ref{thm3.1}, there exists a unique evolution family $U_B$ satisfying (\ref{identity}). Moreover, by direct calculation, $U_B$ has a bounded negative continuation at $(0,v)$ since $U\in W(0)$, say it as $W_B(0)$. Although it remains unclear whether $U_B(t,s)$ is invertible, under the assumption that $U(t,s)$ is not invertible on $J=\mathbb{R}_-$, we may suppose that $X=Z+W_B(0)$ and that $U_B$ is also not invertible. Then, according to Theorem \ref{the3.2}, a similar argument shows that (\ref{eq:2.2}) admits a nonuniform exponential dichotomy on $\mathbb{R}_-$ provided with the condition (\ref{condition}).

\end{remark}

\begin{example}
Consider an example which satisfies the conditions of our theorem in $X=\R^2$ with norm $\|(x_1,x_2)\|=|x_1|+|x_2|$, the
system
\begin{equation}\label{Sys}
\left\{ \begin{aligned}
	x_1'=&(-1+\cos t)x_1+e^{-2\epsilon t} I_t^\gamma \sin t~ x_1,\\
	x_2'=&(1+\sin t)x_2+e^{-2\epsilon t} I_t^\gamma \cos t~ x_2,
\end{aligned}\right.
\end{equation}
where $t\geq0$, $\epsilon>0$ and $I^\gamma_t$ is the nonlocal integral as the Riemann-Liouville fractional integral of order $\gamma>0$ by
$$
I^\gamma_t v(t)=\frac{1}{\Gamma(\gamma)}\int_0^t(t-s)^{\gamma-1}v(s)ds,~~v\in \R,~~t\geq0.
$$
Now, let functions $a,b,c,d:\R_+\to \R$ be given by
$a(t)=-t+\sin t$, $b(t)=t-\cos t$, $c(t)=e^{-2\epsilon t}I^\gamma_t \sin t$ and $d(t)=e^{-2\epsilon t}I^\gamma_t\cos t$. Consider $A(t):\R_+\to L(X)$ and $B(t):\R_+\to L(X)$  given by
$$
A(t)(x_1,x_2)=(a'(t)x_1,b'(t)x_2),~~~~B(t)(x_1,x_2)=(c(t)x_1,d(t)x_2),
$$	
obviously, this system  fulfills the differential equation (\ref{eq:2.2}). By a simple calculation, $\|B(t)\|\leq c_\gamma e^{-2\epsilon t} t^{\gamma} $ for $t\geq 0$ since $\|B(t)(x_1,x_2)\|\leq  c_\gamma e^{-2\epsilon t} t^{\gamma}\|(x_1,x_2)\|$ with $c_\gamma=1/\Gamma(\gamma+1)$.
However, this perturbation is not suitable for the requirement proposed in Barreira and Valls \cite{Barreira2008-1}, where
their established the nonuniform exponential dichotomy if $\|B(t)\|\leq \delta e^{-2\epsilon|t|}$ for $t\in J$ and $\delta>0$ sufficiently small, but our Theorem \ref{the3.2} is applicable for a closed subspace $\{0,0\}$, a line through zero point or $\R^2$,
and the evolution family of linear differential equation is $\{U(t,s)\}_{t\geq s\geq0}$ as
$$
	U(t,s)(x_1,x_2)=(e^{a(t)-a(s)}x_1,e^{b(t)-b(s)}x_2),~~(x_1,x_2)\in X,~~t\geq s\geq 0.
	$$
Hence, the system (\ref{Sys}) admits a nonuniform exponential dichotomy being of roughness under perturbation.
\end{example}	
	
	\section*{Acknowledgements}
The authors wish to express their sincere gratitude to Prof. Linfeng Zhou for his valuable comments and constructive suggestions.

\end{document}